\theoremstyle{plain}
\newtheorem{lem}{Lemma}[section]
\newtheorem{cor}[lem]{Corollary}
\newtheorem{prop}[lem]{Proposition}
\newtheorem{intthm}{Theorem}
\theoremstyle{definition}
\newtheorem{defn}[lem]{Definition}
\newtheorem{ex}[lem]{Example}
\newtheorem{disc}[lem]{Remark}
\newtheorem{construction}[lem]{Construction}
\newtheorem{fact}[lem]{Fact}
\newtheorem{para}[lem]{}
\newtheorem*{convention*}{Convention}
\newcommand{\pd}{\operatorname{pd}}
\newcommand{\id}{\operatorname{id}}	
\newcommand{\fd}{\operatorname{fd}}
\newcommand{\cidim}{\mathrm{CI}\text{-}\!\dim}
\newcommand{\rank}{\operatorname{rank}}	
\newcommand{\edim}{\operatorname{edim}}
\newcommand{\DD}{\operatorname{D}}
\newcommand{\ann}{\operatorname{Ann}}
\newcommand{\cx}{\operatorname{cx}}
\newcommand{\ch}{\operatorname{char}}
\newcommand{\HH}{\operatorname{H}}
\newcommand{\Hom}{\operatorname{Hom}}	
\newcommand{\spec}{\operatorname{Spec}}
\newcommand{\aq}{\operatorname{AQ-dim}}
\newcommand{\Ker}{\operatorname{Ker}}
\newcommand{\ideal}[1]{\mathfrak{#1}}
\newcommand{\m}{\ideal{m}}
\newcommand{\n}{\ideal{n}}
\newcommand{\p}{\ideal{p}}
\newcommand{\q}{\ideal{q}}
\newcommand{\fm}{\ideal{m}}
\newcommand{\fn}{\ideal{n}}
\newcommand{\fp}{\ideal{p}}
\newcommand{\fr}{\ideal{r}}
\newcommand{\bbn}{\mathbb{N}}
\newcommand{\bbr}{\mathbb{R}}
\newcommand{\from}{\leftarrow}
\newcommand{\xra}{\xrightarrow}
\newcommand{\xla}{\xleftarrow}
\newcommand{\vf}{\varphi}
\renewcommand{\geq}{\geqslant}
\renewcommand{\leq}{\leqslant}
\renewcommand{\ker}{\Ker}
\newcommand{\Ext}[4][R]{\operatorname{Ext}_{#1}^{#2}(#3,#4)}
\renewcommand{\Hom}[3][R]{\operatorname{Hom}_{#1}(#2,#3)}	
\newcommand{\Tor}[4][R]{\operatorname{Tor}^{#1}_{#2}(#3,#4)}
\newcommand{\cfd}{\operatorname{CI-fd}}
\newcommand{\cdim}{\operatorname{CI\text{-}dim}}
\numberwithin{equation}{lem}
\begin{document}

\bibliographystyle{amsplain}

\title[Andr\'{E}-Quillen homology and CI flat dimension]{Andr\'{E}-Quillen homology and complete intersection flat dimension}

\author[T. Sharif]{Tirdad Sharif}

\address{School of Mathematics,
Institute for research in fundamental sciences (IPM),
P. O. Box 19395-5746, Tehran, Iran.}
\email{sharif@ipm.ir}

\thanks{Tirdad Sharif was supported by a grant from IPM (No. 83130311).}

%\date{\today}

\keywords{Algebra retract, Andr\'{e}-Quillen dimension, Andr\'{e}-Quillen homology, Betti numbers, complete intersection dimension, complexity, semidualizing module, trivial extension.}
\subjclass[2010]{Primary: 13D03, 13D05; Secondary: 13B10, 14B25.}

\begin{abstract}
We study the relation between the vanishing
of Andr\'{e}-Quillen homology and complete intersection flat dimension and we extend
some of the existing results in the literature.
\end{abstract}

\maketitle

\section{Introduction}
\label{introduction}

\begin{convention*}\label{notation130604a}
In this paper, rings are commutative, noetherian, and local
with identity.
In the entire paper, $(R, \fm, k)$, $(S,\n, \ell)$, and $(T,\fr, v)$ are local rings
where $S$ and $T$ are complete. When it is convenient, $\varphi\colon R\to S$ and $\sigma\colon S\to T$ will denote local ring
homomorphisms.
\end{convention*}

Over forty years ago, Andr\'{e}~\cite{An1, An2} and Quillen~\cite{Q1,Q2}
introduced a homology theory for commutative algebras that
is known as Andr\'{e}-Quillen (or cotangent)
homology. The $n$-th Andr\'{e}-Quillen homology of the
$R$-algebra $S$ with coefficients in an $S$-module $N$, denoted $\DD_n(S\mid R,N)$,
is the $n$-th
homology module of $\mathcal{L_{\varphi}}\otimes_S N$, where
$\mathcal{L_{\varphi}}$ is the cotangent complex of $\varphi$,
uniquely defined in the derived category of the category of $S$-modules.
The vanishing of Andr\'{e}-Quillen homology characterizes
important classes of rings and ring homomorphisms; see;
\cite{An1,An3,Av,AHS,AI2,AI1,Q1,Q2}.

To study the relation between the vanishing of Andr\'{e}-Quillen homology
and the structure of a local ring homomorphism $\vf$, Avramov and Iyengar~\cite{AI2}
introduced the notion of Andr\'{e}-Quillen dimension of the $R$-algebra $S$, denoted $\aq_RS$, that is
$$
\aq_RS:=\sup\{n\in \bbn\mid \DD_n(S\mid R,-)\neq 0\};
$$
in particular, $\aq_{R} S=-\infty$ if and only if
$\DD_n(S\mid R,-)=0$ for all integers $n$.

In this paper, we investigate the relation between the vanishing
of Andr\'{e}-Quillen homology and complete intersection flat dimension.
As the first application of this study, we prove the following theorem which is one of our main results in this paper; see~\ref{para130611a} for the proof.

\begin{intthm}\label{thm130316b}
Assume that $\sigma$ is surjective, $\sigma\varphi$ is complete intersection at $\fr$, and $\aq_{S} T <\infty$.
Then $\cidim_S T<\infty$.
\end{intthm}

It is worth mentioning that the surjectivity condition in this theorem can be removed if one can give an affirmative answer to a question posed by Foxby; see Remark~\ref{disc131027a}.

\vspace{0.05in}

The second main result of this paper is the following theorem, which extends a result of Soto~\cite[Proposition 12]{S} and of Avramov,
Henriques, and \c{S}ega~\cite[(2.5)(3)]{AHS}; see~\ref{para130611b} for the proof.
Here, $\HH_1(K.(\vf)$), denotes the first Koszul homology of $\vf$; see Definition~\ref{defn130610a}

\begin{intthm}\label{thm130316c}
Assume that $\vf$ is essentially of finite type and $\cfd_{R} S<\infty$. Then $\aq_{R} S\leq 2$ if and only if
$\HH_1(K.(\vf))$ is a free $S$-module.
\end{intthm}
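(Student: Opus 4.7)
The plan is to use a Cohen-type factorization to reduce to the case where $\vf$ is surjective, and then to exploit the finite complete intersection flat dimension hypothesis through a CI quasi-deformation to reduce further to the setting of~\cite[Proposition~12]{S} and~\cite[(2.1)(3)]{AHS}.

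For the first reduction, since $\vf$ is essentially of finite type, factor it as $R \xra{\vf_1} R' \xra{\vf_2} S$, where $\vf_1$ is a localization of a polynomial extension of $R$ and $\vf_2$ is surjective. Essential smoothness of $\vf_1$ gives $\DD_n(R'\mid R,-)=0$ for $n\geq 1$ and $\HH_1(K(\vf_1))=0$, so the Jacobi-Zariski long exact sequence for $R\to R'\to S$ produces isomorphisms $\DD_n(S\mid R,-)\cong \DD_n(S\mid R',-)$ for $n\geq 2$ and identifies $\HH_1(K(\vf))$ with $\HH_1(K(\vf_2))$. Flatness of $\vf_1$ further propagates $\cfd_R S<\infty$ to $\cfd_{R'}S<\infty$, so we may replace $\vf$ by the surjection $\vf_2$.

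Now assuming $\vf$ is surjective with $\cfd_R S<\infty$, choose a quasi-deformation $R\to R'\twoheadleftarrow Q$ in which $R\to R'$ is faithfully flat, $Q\twoheadrightarrow R'$ is a CI surjection, and $\fd_Q(R'\otimes_R S)<\infty$. Set $\bar{S}:=R'\otimes_R S$ and let $\bar{\vf}:Q\to \bar{S}$ be the induced surjection, so that $\pd_Q\bar{S}<\infty$. Faithful flatness of $R\to R'$ transfers both the vanishing of Andr\'e-Quillen homology and the freeness of the relevant $\HH_1$ between $\vf$ and the base-changed map $R'\to \bar{S}$. Because $Q\twoheadrightarrow R'$ is a CI surjection, $\DD_n(R'\mid Q,-)=0$ for $n\geq 2$, and the Jacobi-Zariski sequence for $Q\to R'\to \bar{S}$ then converts the statements for $R'\to \bar{S}$ into the corresponding ones for $\bar{\vf}$. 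At that point the desired equivalence is exactly what~\cite[Proposition~12]{S} and~\cite[(2.1)(3)]{AHS} supply for a surjection of finite projective dimension, and the proof closes.

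The principal obstacle is the transfer of \emph{freeness} (as opposed to mere vanishing) of $\HH_1(K(\vf))$ through the two reductions, since freeness is not automatically preserved under a base change or under passage to and from a quotient by a regular sequence. I expect to handle this by working with a minimal Cohen factorization at each stage, so that $K(\vf)$ can be realized as the Koszul complex on a minimal generating system whose behavior under the faithfully flat map $R\to R'$ and the regular-sequence deformation $Q\twoheadrightarrow R'$ can be tracked explicitly; this minimality, together with the Tate model of the cotangent complex in the finite CI-fd setting, should be what makes the freeness descend through all the steps.
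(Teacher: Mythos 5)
Your proposal founders on the very first reduction. The claim that ``flatness of $\vf_1$ propagates $\cfd_R S<\infty$ to $\cfd_{R'}S<\infty$'' (where $R'=R[X]_{\frak M}$) is not known to be true: since $R'\to S$ is surjective, $S$ is a cyclic $R'$-module, so $\cfd_{R'}S<\infty$ would give $\cidim_{R'}S<\infty$, and this is exactly the finiteness content of the second, still-open inequality in Foxby's Question~\ref{question130615a}. The paper states explicitly in the remark following the proof that \emph{if} this inequality held, then one could argue precisely as you do --- pass to the Cohen/regular factorization, get finite CI-dimension over $R'$, and quote Soto's Proposition~12. The whole reason the actual proof is long is to avoid this unproved transfer. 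Your subsequent step, replacing $\fd_Q(R'\otimes_RS)<\infty$ by $\pd_Q\bar S<\infty$, is only legitimate because $\bar S$ is finitely generated over $Q$ \emph{after} the surjectivity reduction, so it cannot be used to repair the gap either: before that reduction $S\otimes_RR''$ is not a finitely generated $Q$-module and finite flat dimension does not yield finite projective dimension.

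What the paper does instead is keep the regular factorization and the quasi-deformation in play simultaneously: with $P:=R'\otimes_RR''$ and $U:=S\otimes_RR''$ it studies $Q\to P\to U$ \emph{locally at every prime $\frak u$ of $U$}, where $\fd_{Q_{\fq}}U_{\frak u}<\infty$ still holds. The key inputs are Lemma~\ref{lem130316a} (a flat-dimension version of Rodicio's lemma, proved via Avramov's descent of deviations), which kills the map $\theta\colon\DD_2(U_{\frak u}\mid Q_{\fq},k(\frak u))\to\DD_2(k(\frak u)\mid Q_{\fq},k(\frak u))$, and Rodicio's theorem on the first Koszul homology module, which kills the connecting map $f$ out of $\DD_3(k(\frak u)\mid U_{\frak u},k(\frak u))$ using the freeness hypothesis; together with the injectivity of $\lambda$ (coming from the regular-sequence kernel of $Q_\fq\to R''_{\fp''}$) a diagram chase forces $\DD_2(U_{\frak u}\mid Q_{\fq},k(\frak u))=0$, hence vanishing in all degrees $\geq 2$ by rigidity, and then one descends to $\vf$ by faithful flatness. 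None of this machinery appears in your proposal, and the obstacle you do identify (transport of freeness of $\HH_1(K(\vf))$) is the comparatively minor one, already absorbed by the definition of $\HH_n(K(\vf))$ via Cohen factorizations in~2.7 and by Lemma~\ref{prop130720a} for the easy direction.
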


\section{Some Background Material}
\label{background}

This section lists important foundational material for use in this paper.
We use the notations $\mu_R(M)$ and $\edim(R)$ for
the minimal number of generators of a finitely generated $R$-module $M$ and the embedding dimension of the ring $R$, respectively.

The objects defined in the next definition are from the work of
Avramov, Foxby, and B. Herzog~\cite{AFH}.

\begin{para}
A \emph{regular  factorization of $\vf$}
is a diagram of local ring homomorphisms
$R\xra{\dot{\varphi}} R'\xra{\varphi'} S$
such that $\varphi=\varphi'\dot{\varphi}$, the map $\dot{\varphi}$ is
flat, the closed fibre $R'/\fm R'$ is regular, and $\varphi'$ is surjective.
A \emph{Cohen factorization of $\vf$} is a regular factorization
$R\xra{\dot{\varphi}} R'\xra{\varphi'} S$ of $\vf$ such that $R'$ is complete.
Since we assumed that $S$ is complete, existence of a Cohen factorization of $\vf$ is
guaranteed by~\cite[Theorem (1.1)]{AFH}.
\end{para}

\begin{para}
The local ring homomorphism $\varphi$ is called \emph{complete intersection
at $\n$} if in a given Cohen factorization $R\to R'\xra{\varphi'} S$ of
$\varphi$ the ideal $\ker \varphi'$ is generated by an $R'$-regular sequence.
The complete intersection property for $\varphi$ is independent of the choice of Cohen factorization
by~\cite[Remark (3.3)]{Av}.
Also $\varphi$ is called \emph{locally complete
intersection} if for each $\q\in \spec(S)$, the local homomorphism $\varphi_\q: R_{\p}\rightarrow S_{\q}$, in which $\p=\q\cap R$, is complete intersection at $\q S_{\q}$.
\end{para}

We proceed with introducing the complete intersection flat dimension as defined by Sather-Wagstaff~\cite{SSW1}.

\begin{para}\label{fact130625a}
Fix a an arbitrary $R$-module $M$.
Define
\begin{align*}
\cfd_R M
:=\inf\left\{\fd_Q(R'\otimes_R M)-\pd_Q(R')\left| \text{
\begin{tabular}{@{}c@{}}
$R\to R'\from Q$ is a \\ quasi-deformation
\end{tabular}
}\!\!\!\right. \right\}.
\end{align*}
Recall that a diagram $R\to R'\from Q$ of local ring homomorphisms is a \emph{quasi-deformation of $R$} if $R\to R'$ is
flat and $R'\xla{}Q$ is surjective with kernel
generated by a $Q$-regular sequence. It is clear that if $M$ is finitely generated then $\cidim_R M=\cfd_R M$.
\end{para}

\begin{para}
Here, we recall some notions from~\cite{AI2, L}.

Let $\overline{\vf}\colon k\to \ell$ be the induced map between the
residue fields. Then $\varphi$ is called \emph{almost
small} if the kernel of the homomorphism
$$
\Tor[\varphi]{*}{\overline{\varphi}}\ell\colon
\Tor[R]{*}k\ell\to \Tor[S]{*}\ell\ell
$$
of graded algebras is
generated by elements of degree 1.
Assuming $k=\ell$, the homomorphism $\vf$
is called \emph{large} if $\Tor[\varphi]{*}{\overline{\varphi}}k$ is surjective.
\end{para}

The next definition originates with work of Gulliksen and Levin~\cite{GL}; see~\cite[\S 6]{A} for details.
It involves a different type of algebra extensions introduced by Tate~\cite{T};
see~\cite{A, GL} for details.

\begin{para}
Let $A$ be a DG
$R$-algebra, and assume that $\varphi$ is surjective. Let $A\langle Y\rangle$ denote a DG $R$-algebra
obtained from $A$ by adjunctions of sets of exterior
variables of positive odd degrees and of divided power
variables of positive even degrees. Then a factorization
$R\longrightarrow R\langle
Y\rangle\stackrel{\simeq}\longrightarrow S$ of
$\varphi$ is called an
\emph{acyclic closure of $\varphi$} if $\partial(Y_1)$ minimally
generates $\ker \varphi$, and \{cls$(\partial(y))\mid y\in Y_{n+1}$\} is
a minimal generating set of $\HH_n(R\langle Y_{\leq n}\rangle))$ for
all $n\geq 1$.
\end{para}

\begin{fact}\label{fact1}
If $J=(x)=(y)$ is an ideal of $R$, such that $x$ and $y$ are two minimal set of generators for $J$, then the Koszul complexes associated to $(x)$ and $(y)$ are isomorphic, see \cite{BH}, and hence homology isomorphism. Thus we define the Koszul complex, $K.^{R}(J)$, to be the Koszul complex associated to an arbitrary minimal set of generators for ideal $J$.
\end{fact}

\begin{fact}\label{fact2}
Let $R\xra{\dot{\varphi_j}} R_j\xra{\varphi_j} S$, with $J_j=\Ker\varphi_j$, for $j=1,2$, be two arbitrary Cohen factorization of $\varphi$.
From~\cite[(1.2)]{AFH} it follows that they have a common deformation $R\to R'\xra{\varphi'} S$, with $J'=\Ker\varphi'$ such that $R_j=R'/I_j'$ and $J_j=J'/I_j'$, for $j=1,2$, where $I_j'$ is an ideal generated by an $R'$-regular sequence that extends to a minimal set of generators for $J'$. Let $\underline{\textbf{x}}_j$ be a minimal set of generators for $I_j'$, which can be extended to a minimal set of generators $\underline{\textbf{x}}_j, \underline{\textbf{y}}_j$ for $J'$. Then the residues $\underline{\textbf{y}}_j'$ of  $\underline{\textbf{y}}_j$ in $R_j$ form a minimal set of generators for $J_j$, by Nakayama's lemma. Now from the above fact we have
$$
K.^{R'}(J')\cong K.^{R'}(\underline{\textbf{x}}_j)\otimes_{R'} K.^{R'}(\underline{\textbf{y}}_j)\simeq R_j\otimes_{R'}K.^{R'}(\underline{\textbf{y}}_j)\cong K.^{R_j}(\underline{\textbf{y}}_j')\cong K.^{R_j}(J_j).
$$
\noindent Thus $K.^{R'}(J')\simeq K.^{R_j}(J_j)$, as $R'$-complexes, for $j=1,2$. Since $J'\HH(K.^{R'}(J'))=0$, we get $\HH(K.^{R'}(J'))\cong \HH(K.^{R_j}(J_j))$, as $S$-modules, for $j=1,2$.
\end{fact}

\vspace{0.05in}

\noindent By the above fact, we can give the following definition.

\begin{defn}\label{defn130610a}
Let $R\to R'\xra{\varphi'} S$ be a Cohen factorizations of $\varphi$, with $J'=\Ker\varphi'$. We define the \emph{nth Koszul homology} of $\varphi$, denoted by $\HH_n(K.(\varphi))$, to be $\HH_n(K.^{R'}(J'))$.
\end{defn}

\noindent Indeed, we can consider $\HH(K.(\varphi))$, as an invariant of the local homomorphism $\varphi$.

\begin{lem}\label{para131027a}
Let $R\to R'\xra{\varphi'} S$ be a Cohen factorization of $\varphi$. If $E$ is the Koszul complex associated to an arbitrary set of generators ${\it \underline{\textbf{v}}}$ for $J':=\Ker \vf'$. Then $\HH_1(E)$ is a free $S$-module if and only if $\HH_1(K.(\varphi))$ is a free $S$-module.
\end{lem}
\begin{proof}
Let $\underline{\textbf{v}}=x_1,...,x_p,x_{p+1},...,x_n$, in which $\underline{\textbf{w}}=x_1,...,x_p$ is a minimal set of generators for $J'$. Then from~\cite[(1.6.21)]{BH} and Fact 2.6, we get
$$
\HH_1(E)\cong \HH_1(K.^{R'}(\underline{\textbf{w}}))\oplus S^{\oplus(n-p)}\cong \HH_1(K.^{R'}(J'))\oplus S^{\oplus(n-p)}.
$$
Now from the above fact, assertion holds.
\end{proof}

\begin{para}
Let $M$ be a finitely generated $R$-module. The \emph{complexity of $M$
over $R$}, denoted by $\cx_R M$,
is the number
$$
\cx_R M:=\inf\{d\in \bbn\mid \text{there exists}\ \gamma\in \bbr\ \text{such that}\
 \beta^R_n(M)\leq \gamma n^{d-1}\ \text{for all $n\gg 0$}\}
$$
where $\beta^R_n(M):=\rank_k(\Tor[R]nMk)$ is the $n$-th \emph{Betti number} of $M$.
\end{para}

\section{Main results}

\label{sec130315a}
This section contains the proofs of Theorems~\ref{thm130316b} and~\ref{thm130316c}.
We begin by a construction used by Iyengar and
Sather-Wagstaff~\cite{IW}, essentially given by Avramov, Foxby, and B. Herzog~\cite{AFH}. We use this construction in the proof of Theorem~\ref{thm130316b} and in Remark~\ref{disc131027a}.

\begin{construction}\label{fact130607c}
Let $R\xra{\dot{\vf}}R'\xra{\vf'} S$ and $R'\xra{\dot{\rho}}
R''\xra{\rho'} T$
be Cohen factorizations of $\vf$ and $\sigma\vf'$,
respectively. Then $\rho'$ factors through the tensor
product $S'=R''\otimes_{R'}S$ and gives the following commutative
diagram of local ring homomorphisms
\[
\xymatrixrowsep{2.5pc}
\xymatrixcolsep{2.5pc}
\xymatrix{
&& R''
\ar@{->}[dr]^{\vf''}
\ar@/^2.5pc/[ddrr]^{\rho'=\sigma'\vf''}
\\
& R'
\ar@{->}[dr]^{\vf'}
\ar@{->}[ur]^{\dot\rho}
&& S'
\ar@{->}[dr]^{\sigma'}
\\
R
\ar@{->}[rr]^{\vf}
\ar@/^2.5pc/[uurr]^{\dot\rho \dot\vf}
\ar@{->}[ur]^{\dot\vf}
&& S
\ar@{->}[rr]^{\sigma}
\ar@{->}[ur]^{\dot\sigma}
&& T
}
\]
in which $\dot{\sigma}$ and $\vf''$ are the natural maps to
the tensor product and the diagrams
$S\to S'\to T$,
$R\to R''\to T$, and
$R'\to R''\to S'$ are Cohen factorizations.
\end{construction}

Here is the proof of Theorem~\ref{thm130316b}.

\begin{para}[Proof of Theorem~\ref{thm130316b}]\label{para130611a}
We use the notation from Construction~\ref{fact130607c}. Since $\sigma$ is surjective, we get the following diagram
\begin{equation}\label{eq130607a}
R'\xra{\vf'} S\xra{\sigma}T
\end{equation}
of surjective local ring homomorphisms such that $R\xra{\dot \vf} R'\xra{\sigma \vf'} T$ is a Cohen factorization.
Since $\sigma\varphi$ is complete intersection at $\fr$, from~\cite[(1.7), (1.8)]{Av} we get that
$\sigma\vf'$ is also complete intersection at
$\fr$. Hence, by~\cite[Corollary (4.9)]{AI2} we conclude that
$\vf'$ is an almost small ring homomorphism. Since $\DD_n(T\mid S,v)=0$
for all $n\gg 0$, from the Jacobi-Zariski exact sequence arising
from diagram~\eqref{eq130607a}, we find that $\DD_n(S\mid R',v)=0$ for all $n\gg 0$.
Since $\vf'$ is almost small, it follows from~\cite[(6.4)]{AI2}
that $\vf'$ is complete intersection at $\fr$. Thus, $\Ker \vf' $ is generated by an
$R'$-regular sequence, and the diagram $S\xra{\id} S\xla{\vf'} R'$ of local ring homomorphisms,
is a quasi-deformation. Hence, we conclude that $\pd_{R'}(T\otimes_{S} S)=\pd_{R'} T<\infty$. Therefore,
$\cidim_{S} T<\infty$. \qed
\end{para}

The following lemma is an extension of~\cite[Lemma 1(2)]{R1}. This lemma is well-known; see ~\cite{Avramov}. For convenience of the reader we give a proof based on the Cohen factorization.

\begin{lem}\label{lem130316a}
If $\fd_{R}S <\infty$, then the homomorphism
$\DD_2(S\mid R,\ell)\xra{\theta} \DD_2(\ell\mid R,\ell)$ is
trivial.
\end{lem}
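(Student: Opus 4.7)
The plan is to replace the map $\alpha$ by the next map in the Jacobi--Zariski long exact sequence and then translate the resulting injectivity statement into a question about Hopf-algebra indecomposables that can be controlled by the hypothesis $\fd_R S<\infty$.

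First, I would write down the Jacobi--Zariski long exact sequence attached to $R \xra{\vf} S \to \ell$,
$$\cdots \to \DD_{m+1}(\ell \mid S,\ell) \to \DD_m(S\mid R,\ell) \xra{\alpha} \DD_m(\ell\mid R,\ell) \xra{\beta_m} \DD_m(\ell\mid S,\ell) \to \cdots,$$
so that, by exactness, $\alpha=0$ in degree $m$ is equivalent to $\beta_m$ being injective. The target then becomes: $\beta_m$ is injective for $m=2,4$.

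Next, I would invoke the Andr\'{e}--Quillen/Hopf-algebra dictionary recalled in the excerpt. Applying the Jacobi--Zariski sequence to $R\to k\to\ell$ (together with separability of $k\to\ell$, which kills $\DD_*(\ell\mid k,\ell)$, and a reduction of residue fields if necessary) gives the identification $\DD_m(\ell\mid R,\ell)\cong V_m(R)\otimes_k\ell$, and likewise $\DD_m(\ell\mid S,\ell)\cong V_m(S)$, as long as $m\leq \pi(R), \pi(S)$. Since $\pi\geq 4$ in every characteristic, both identifications are valid for $m=2,4$. Under them, $\beta_m$ becomes the natural map on Hopf-algebra indecomposables
$$V_m(R)\otimes_k\ell \;\longrightarrow\; V_m(S)$$
induced by $\vf$ on the graded $\Tor$-algebras $\operatorname{Tor}^R(k,k)$ and $\operatorname{Tor}^S(\ell,\ell)$.

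The heart of the argument is to use $\fd_R S<\infty$ to force injectivity on indecomposables in degrees $2$ and $4$. For this I would bring in the change-of-rings spectral sequence
$$E^2_{p,q}=\operatorname{Tor}^S_p(\operatorname{Tor}^R_q(S,\ell),\ell)\Longrightarrow \operatorname{Tor}^R_{p+q}(\ell,\ell),$$
which is bounded in the $q$-direction by $\fd_R S$, and analyze the induced map on indecomposables by tracking the edge homomorphism alongside the divided-power structure on each side. The parity constraint coming from divided powers is what singles out $m=2$ and $m=4$: even-degree indecomposables appear as divided-power squares of lower-degree classes, and the bookkeeping closes off only in these two degrees within the range $m\leq \pi$ allowed by the characteristic.

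The main obstacle is exactly this low-degree divided-power/edge analysis of the change-of-rings spectral sequence in the mixed-characteristic case $\pi(R)=2p$, where the restriction to $m=2,4$ becomes sharp. A cleaner route, which I would probably take if the edge analysis proves too cumbersome, is to transcribe Rodicio's original argument in \cite{R1} almost verbatim, using the vanishing $\operatorname{Tor}^R_q(S,\ell)=0$ for $q>\fd_R S$ as a direct replacement for the input that drives his proof in the special case $S=k$.
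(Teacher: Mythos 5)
Your reduction is exactly the one the paper uses: pass to the Jacobi--Zariski sequence of $R\to S\to\ell$ so that triviality of $\alpha$ becomes injectivity of $\beta_m$, identify $\DD_m(\ell\mid R,\ell)$ with $V_m(R)\otimes_k\ell$ and $\DD_m(\ell\mid S,\ell)$ with $V_m(S)$ via the sequence for $R\to k\to\ell$ and the Andr\'e--Quillen/Hopf-algebra comparison in the range $m\leq\pi$, and observe that $\pi\geq 4$ in all characteristics so $m=2,4$ are covered. Up to this point you match the paper, modulo one small omission: the comparison gives $\DD_m(k\mid R,k)\otimes_k\ell\to\DD_m(\ell\mid S,\ell)$, and one still needs a coefficient base-change (the paper cites Andr\'e's Lemme 3.20) to convert this into a statement about $\DD_m(k\mid R,\ell)$ before splicing it into the Jacobi--Zariski sequence.

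The genuine gap is the step you yourself flag as ``the heart'': you never actually prove that $\fd_RS<\infty$ forces $V_{2n}(R)\otimes_k\ell\to V_{2n}(S)$ to be injective. This is not a routine edge-homomorphism computation in the change-of-rings spectral sequence; it is precisely Avramov's theorem on descent of deviations along homomorphisms of finite flat dimension (\emph{Descente des d\'eviations par homomorphismes locaux\dots}, C.~R. Acad. Sci. Paris 1982), which the paper cites as the single decisive input. Your sketch of ``tracking the edge homomorphism alongside the divided-power structure'' does not constitute an argument, and your stated explanation for why the conclusion is confined to $m=2,4$ is off: Avramov's injectivity holds for \emph{all} even degrees $2n$; the restriction to $m\leq 4$ comes solely from the range $m\leq\pi(R)$ in which $V_m\cong\DD_m$, not from any divided-power bookkeeping that ``closes off'' in those degrees. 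Your fallback of transcribing Rodicio's argument is the right instinct (the lemma is an extension of his Lemma 1(2)), but as written the proposal leaves the one nontrivial ingredient unestablished; either cite Avramov's descent theorem or genuinely carry out the adaptation of Rodicio's proof.
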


\begin{proof}
Let $R\xra{\dot{\vf}} R'\xra{\vf'} S$ be a Cohen factorization of $\vf$, and consider the following
commutative diagram of local ring homomorphisms
$$\xymatrix{
R\ar[r]^{\dot{\vf}} \ar[d]_{\vf'} & R'\ar[r]^{\vf'} \ar[d]^{\dot{\vf}} & S\ar[d]^{\pi} \\
S\ar[r]^{\id}\ar[d]^{\pi} & S\ar[r]^{\pi}\ar[d]^{\pi} & \ell \\
\ell \ar[r]^{\id} & \ell}
$$
where $\pi$ is the natural surjection. This diagram induces a commutative diagram
\begin{equation}\label{eq130609b}
\begin{split}
\xymatrix{
\DD_2(S\mid R,\ell)\ar[r]^{\theta}\ar[d]& \DD_2(\ell\mid R,\ell)\ar[d]^{\eta}\\
\DD_2(S\mid R',\ell)\ar[r]^{\alpha}& \DD_2(\ell\mid R',\ell)
}
\end{split}\end{equation}
of $\ell$-vector spaces.

Setting $\overline{R'}:=R'/\frak mR'$, from the Jacobi-Zariski exact sequence of $k\xra{}\overline{R'}\xra{}\ell$ we get the following exact sequence:
\begin{equation}\label{eq130609a}
\xymatrix{
\DD_{n+1}(\ell \mid \overline{R'},\ell)\xra{} \DD_n(\overline{R'}\mid k,\ell)\xra{} \DD_n(\ell \mid k,\ell)
}
\end{equation}
From ~\cite[(1.6)(2)-(3)]{Av} we get $\DD_n(\ell \mid k,\ell)=0$ for all $n\geq 2$, and $\DD_{n+1}(\ell \mid \overline{R'},\ell)=0$
for all $n\geq 2$. Hence, by \eqref{eq130609a} we conclude that $\DD_n(\overline{R'}\mid k,\ell)=0$ for all $n\geq 2$.
On the other hand, by
flat base change we have $\DD_n(\overline{R'}\mid k,\ell)\cong\DD_n(R'\mid R,\ell)$ for
all integers n; see ~\cite[4.54]{An2}. Therefore, $\DD_n(R'\mid R,\ell)=0$ for
all $n\geq 2$. Now,
the Jacobi-Zariski exact sequence arising from $R\xra{} R' \xra{}\ell$ implies that $\eta$ (from
diagram \eqref{eq130609b}) is injective.

From ~\cite[(3.2) Lemma]{AFH} and our assumption, we have $\pd_{R'} S < \infty$. Hence by ~\cite[ Lemma 1(2)]{R1} we get that $\alpha$ (from diagram \eqref{eq130609b}) is trivial. Therefore, commutativity of diagram \eqref{eq130609b} and the fact that $\eta$ is injective imply that $\theta$ is trivial.
\end{proof}

\begin{lem}\label{prop130720a}
If $\aq_{R} S\leq 2$, then
$\HH_1(K.(\vf))$ is a free $S$-module.
\end{lem}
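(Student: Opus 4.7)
The strategy is to pass to a Cohen factorization of $\vf$ and reduce to the surjective case, which is already covered by~\cite[(2.1)(3)]{AHS}.

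Fix a Cohen factorization $R\xra{\dot\vf}R'\xra{\vf'}S$ and set $J':=\ker(\vf')$, so that by definition $\HH_1(K(\vf))=\HH_1(K^{R'}(J'))$. The first step in the plan is to transfer the hypothesis from $R$ to $R'$. Because $\dot\vf$ is flat with regular closed fibre,~\cite[Lemma (1.7)]{Av} gives $\DD_n(R'\mid R,N)=0$ for every $n\geq 1$ and every $S$-module $N$; this is exactly the base-change input used in the proof of Theorem~\ref{thm130316b} above (to deduce $\DD_n(T\mid S',v)=0$ from $\DD_n(T\mid S,v)=0$ through the Cohen factorization $S\to S'\to T$). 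Feeding this vanishing into the Jacobi--Zariski long exact sequence for $R\to R'\to S$ yields $\DD_n(S\mid R',N)\cong \DD_n(S\mid R,N)$ for every $n\geq 2$ and every $S$-module $N$. Consequently the hypothesis $\aq_{R}S\leq 2$ forces $\DD_n(S\mid R',-)=0$ for all $n\geq 3$, that is, $\aq_{R'}S\leq 2$.

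The second step is to apply~\cite[(2.1)(3)]{AHS} to the surjective local homomorphism $\vf'\colon R'\to S$: since $\aq_{R'}S\leq 2$, that result guarantees that $\HH_1(K^{R'}(J'))$ is a free $S$-module. Combined with $\HH_1(K(\vf))=\HH_1(K^{R'}(J'))$, this is precisely the desired conclusion.

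The only non-formal ingredient is the AQ-vanishing for $\dot\vf$ on $S$-modules, and this is a standard fact already exploited elsewhere in the paper, so the reduction is clean. The main obstacle, were one to insist on avoiding the appeal to~\cite{AHS}, would be to produce a direct minimal-model argument: one would build the minimal model $R'\langle Y\rangle\xra{\simeq}S$ of $S$ over $R'$ and translate $\DD_n(S\mid R',-)=0$ for $n\geq 3$ into the freeness of $\HH_1(R'\langle Y_1\rangle)=\HH_1(K^{R'}(J'))$ as an $S$-module (by controlling the $\partial$-images of the divided-power variables $Y_2$). Since that is essentially the content of~\cite[(2.1)(3)]{AHS}, the citation-based route outlined above is the most efficient option.
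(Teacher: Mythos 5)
Your proof is correct and follows essentially the same route as the paper's: pass to a Cohen factorization, transfer the vanishing $\DD_n(S\mid R,-)=0$ for $n\geq 3$ from $R$ to $R'$ via \cite[Lemma (1.7)]{Av} and the Jacobi--Zariski sequence, and then invoke the known surjective case to get freeness of $\HH_1(K^{R'}(J'))$. The only difference is cosmetic: the paper cites \cite[Corollary 3]{BMR} for that last step where you cite \cite[(2.1)(3)]{AHS}, and these are interchangeable here.
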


\begin{proof}
Let $R\xra{\dot{\vf}} R'\xra{\vf'} S$ be a Cohen factorization of $\vf$, and let $J':=\Ker \varphi'$.
By assumption we get $\DD_n(S\mid R,\ell)=0$ for all $n\geq 3$. Hence, $\DD_n(S\mid R',\ell)=0$ for all $n\geq 3$ by~\cite[Lemma (1.7)]{Av}. This implies that $\DD_n(S\mid R',-)=0$ for all $n\geq 3$. Now~\cite[Corollary 3]{BMR} implies that $\HH_1(E')$, in which $E'$ is the Koszul complex associated to an arbitrary set of generators for $J'$, is a free $S$-module. The assertion now follows from Lemma~\ref{para131027a}.
\end{proof}

Here is the proof of Theorem~\ref{thm130316c}.

\begin{para}[Proof of Theorem~\ref{thm130316c}]\label{para130611b}
Since $\varphi$ is essentially of finite type, it admits a regular
factorization $R\to R[X]_{\frak M}\to S$, in which
$R[X]:=R[x_1,x_2,...,x_n]$, for some $n\geq 1$, and
$\frak M$ is
a prime ideal of $R[X]$ lying over $\m$.
Since $\cfd_{R} S<\infty$, there is a quasi-deformation
$R\to R''\xla{\eta} Q$ of $R$ such
that $\fd_{Q}(S\otimes_{R} R'')<\infty$. Set $R':=R[X]_{\frak M}$,
$P:=R'\otimes_{R} R''$, $U:=S\otimes_{R} R''$, and consider the diagram
$Q\xra{\alpha} P
\xra{\beta} U$ of natural ring
homomorphisms, in which $\beta$ is surjective.
Let $\frak u\in \spec(U)$, and set
$\p:=\beta^{-1}(\frak u)$ and
$\q:=\alpha^{-1}(\p)$. Then we have the diagram
\begin{equation}\label{eq130610a}
Q_{\q}\xra{\alpha_{\q}}
P_{\p}\xra{\beta_{\p}}
U_{\frak u}
\end{equation}
of local ring homomorphisms
that gives us the
commutative diagram
$$\xymatrix{
Q_{\q}\ar[r]^{\alpha_{\q}} \ar[d]_{\beta_{\p}\alpha_{\q}} & P_{\p}\ar[r]^{\beta_{\p}} \ar[d]^{\beta_{\p}} & U_{\frak u}\ar[d] \\
U_{\frak u}\ar[r]^{\id}\ar[d] & U_{\frak u}\ar[r]\ar[d] & k(\frak u)\\
k(\frak u)\ar[r]^{\id} & k(\frak u)}
$$
of local ring homomorphisms.
This diagram induces the following commutative
diagram of $k(\frak u)$-vector spaces
\begin{equation}\label{eq130610b}
\begin{split}
\xymatrix{
\DD_3(k(\frak u)\mid U_{\frak u},k(\frak u))\ar[r]^{f} & \DD_2(U_{\frak u}\mid P_{\fp},k(\frak u))\ar[r] & \DD_2(k(\frak u)\mid P_{\fp},k(\frak u))\\
\DD_3(k(\frak u)\mid U_{\frak u},k(\frak u))\ar[r]^{g}\ar@{=}[u]& \DD_2(U_{\frak u}\mid Q_{\q},k(\frak u))\ar[r]^{\theta}\ar[u]^{h} & \DD_2(k(\frak u)\mid Q_{\q},k(\frak u))\\
& \DD_2(P_{\fp}\mid Q_{\q},k(\frak u))\ar[u]^{\lambda}}
\end{split}\end{equation}

Since $\fd_{Q_{\q}} U_{\frak u}<\infty$, it follows from Lemma~\ref{lem130316a} that
$\theta$ is trivial and hence $g$ is surjective, and since $\HH_1(K.(\vf))$ is a free $S$-module, we can see that
$\HH_1(E)$ is a free $U_{\frak u}$-module, where $E$ is the Koszul
complex associated to the set of generators for the ideal $\Ker \beta_{\p}$ of $P_{\fp}$ arising from a set of generators for $\Ker \beta$.
Now from~\cite[(1.6.21)]{BH} it follows that $\HH_1(K.^{P_{\fp}}(\Ker(\beta_{\p})))$ is a free $U_{\frak u}$-module. Thus,
from~\cite[Theorem 1]{R2} we conclude that $f$ is trivial. Since $\DD_n(R[X]\mid R,-)=0$ for all $n\geq 1$, we get
$\DD_n(R'\mid R,-)=0$ for all $n\geq 1$. The
faithful flatness of $R''$ over $R$ also yields $\DD_n(P\mid R'',-)=0$
for all $n\geq 1$.
Hence, setting $\p'':=\p\cap R''$, we get $\DD_n(P_{\fp}\mid R''_{\p''}, k(\p))=0$ for all $n\geq 1$.

Consider the diagram $Q_{\q}\xra{\eta_{\q}} R''_{\p''}\xra{}
P_{\fp}$ of local ring homomorphisms. Since $\Ker \eta_{\q}$ is generated by a $Q_{\q}$-regular
sequence, from the Jacobi-Zariski exact sequence we get
$\DD_n(P_{\fp}\mid Q_{\q},k(\p))=0$ for all $n\geq 2$. Thus,
$h$ in diagram~\eqref{eq130610b} is injective. Commutativity of
diagram~\eqref{eq130610b} implies that $\DD_2(U_{\frak u}\mid Q_{\q},k(\frak u))=0$. Hence,
by~\cite[Proposition (1.8)]{Av} we get $\DD_n(U_{\frak u}\mid Q_{\q},k(\frak u))=0$
for all $n\geq 2$. Now, from the Jacobi-Zariski exact sequence of
diagram~\eqref{eq130610a} we conclude that $\DD_n(U_{\frak u}\mid P_{\p},k(\frak u))=0$
for all $n\geq 3$.
Since $\frak u$ was an arbitrary prime ideal of $U$,
we conclude that $\DD_n(U\mid P,-)=0$ for all
$n\geq 3$; see~\cite{An2}. Now, faithful flatness of $R''$ over $R$ implies that
$\DD_n(S\mid R',-)=0$ for all $n\geq 3$. Thus, from~\cite[Lemma (1.7)]{Av} we have
$\DD_n(S\mid R,\ell)=0$ for all $n\geq 3$. Since
$\varphi$ is essentially of finite type, from~\cite[Lemma 8.7]{I} we
conclude that $\aq_{R} S\leq 2$.

The converse follows immediately from Lemma~\ref{prop130720a}. \qed
\end{para}

Foxby asked the following question in his personal communications.

\vspace{0.05in}
\noindent{\bf Foxby's Question}. Let $R\xra{\dot{\varphi}} R'\xra{\varphi'} S$ be a Cohen factorization of
$\varphi$, and let $M$ be a finitely generated $S$-module. Do the following inequalities hold?
$$
\cfd_{R} M\leq \cidim_{R'}M\leq \cfd_{R}N+\edim(R'/\fm R')
$$
%\vspace{0.05in}

In the next remark, we show that an affirmative answer to this question helps us to generalize Theorem~\ref{thm130316b}. In fact, validity of the left inequality of Foxby's Question enables us to remove the hypothesis ``$\sigma$ is surjective'' in Theorem~\ref{thm130316b}; see~(\ref{disc131027a}.1). Also, we show that an affirmative answer to the right inequality in Foxby's Question provides a much shorter proof for Theorem~\ref{thm130316c}, without the assumption of finiteness of $\varphi$; see~(\ref{disc131027a}.2).

\begin{disc}\label{disc131027a}
~(\ref{disc131027a}.1) Using the notation from Construction~\ref{fact130607c}, there exists a diagram $R''\xra{\vf''} S'\xra{\sigma'}T$ of surjective local ring homomorphisms. Since $\sigma\varphi$ is complete intersection at $\fr$, from~\cite[(1.7), (1.8)]{Av} we get that $\sigma'\vf''$ is also complete intersection at $\fr$. Now a similar argument as in the proof of Theorem~\ref{thm130316b}, yields $\cidim_{S'} T<\infty$. Then the left inequality of Foxby's Question implies that $\cfd_{S} T<\infty$, as desired.

~(\ref{disc131027a}.2) Let $R\xra{\dot{\vf}} R'\xra{\vf'} S$ be a Cohen factorization of $\vf$. From the right inequality in Foxby's Question, we get $\cidim_{R'} S<\infty$. Since $\HH_1(K.(\varphi))$ is a free $S$-module, from Lemma~\ref{para131027a} and ~\cite[Proposition 12]{S} we conclude that
$\DD_n(S\mid R',\ell)=0$ for all $n\geq 3$. Now, the assertion follows from~\cite[Lemma (1.7)]{Av} and~\cite[Lemma 8.7]{I}.
\end{disc}

The next corollary of Soto~\cite[Proposition 12]{S} and of Avramov,
Henriques, and \c{S}ega~\cite[(2.5)(3)]{AHS} follows from Theorem~\ref{thm130316c} and Lemma~\ref{para131027a}.

\begin{cor}\label{cor131027a}
Assume that $\varphi$ is surjective, and let $H_1(E)$ be the first homology of the Koszul complex $E$ associated to an arbitrary set of generators for $J:=\Ker \varphi$. If $\cdim_{R} S<\infty$ and $H_1(E)$ is a free $S$-module, then $\aq_{R} S\leq 2$.
\end{cor}

Using Theorem~\ref{thm130316b}, Lemma~\ref{prop130720a} and Corollary~\ref{cor131027a} we have the next result due to Soto~\cite[Proposition 23 (i)$\Longleftrightarrow$(ii)]{S}. Recall that $S$ is called an \emph{algebra retract of
$R$} if there exists a local ring homomorphism $\psi\colon S\to R$ such that
$\varphi\psi=\id_S$.

\begin{cor}\label{cor130316b}
Let $S$ be an algebra retract of $R$ and let $\HH_1(E)$ be the first homology of the Koszul complex $E$ associated to an arbitrary set of generators for $J:=\Ker \varphi$. Then $\aq_R S\leq 2$ if and only if $\cidim_R S<\infty$ and $\HH_1(E)$ is a free $S$-module.
\end{cor}

\begin{fact}\label{fact130528a}
Let $S$ be an algebra retract of $R$ and $\cx_{R} S<\infty$. If $\ch(k)=0$ then $S$ contains the rational numbers. On the other hand $\varphi\colon R\to S$ is a large homomorphism. Let $R\langle U\rangle$ be the acyclic closure
of $\vf$, which is a minimal free resolution for $S$ as an $R$-module by~\cite[Corollary (2.7)]{AI3}. Since $\cx_R S<\infty$, it follows from~\cite[Lemma~ 16]{S} that $\DD_{2n}(S\mid R, -)=0$ for all $n\gg 0$. Now from~\cite[Theorem (7.5)(iv)$\Longrightarrow$(ii)]{AI2} we have $\aq_R S\leq 2$. Thus by Corollary~\ref{cor130316b}, $\cidim_R S<\infty$.
\end{fact}

The above fact answers Soto's Problem 24 in~\cite{S} when $\ch(k)=0$. As an application of this fact we also have the following result.

\begin{cor}
Let $A:=S\otimes_{R}S$, and assume that $\varphi$ is a local flat homomorphism essentially of finite type. Let $\omega : A\to S$ be the natural surjection and $\q= \omega^{-1}(\fn)$. If $\ch(\ell)=0$ and $\cx_{A_{\q}} S<\infty$, then $\varphi$ is locally complete intersection.
\end{cor}

\begin{proof}
Consider the algebra retract $S\rightarrow A_{q}\rightarrow S$. Fact~\ref{fact130528a} implies that
$\aq_{A_{\q}}S\\\leq 2$. Using Jacobi-Zariski exact sequence
arising from $A\rightarrow A_{\q}\rightarrow S$ and the localization property, we have $\aq_A S\leq 2$. Now Jacobi-Zariski exact sequence arising from the
natural diagram $S\rightarrow A\rightarrow S$, and the flat base change imply that $\DD_{n}(S\mid R,-) = 0$, for
$n\geq 2$. Thus $\varphi$ is locally complete intersection by~\cite[Theorem (1.2)]{Av}.
\end{proof}

\begin{disc}\label{disc130711a}
At this time, we do not know if the assumption $\ch(k)=0$
is necessary in Fact~\ref{fact130528a}. However, as we will see in Proposition~\ref{cor130528b}, when we work with $R$ as an algebra retract of its trivial extension by an $R$-module $M$ with $\ann_R(M)=0$, we are able to remove this assumption. This can be done using a result of Herzog.
\end{disc}

Recall that the \emph{trivial extension of $R$  by an $R$-module $M$}, denoted $R\ltimes M$, is the underlying  $R$-module  $R\oplus M$ equipped with a ring structure given by the multiplication rule
$(r,m)\cdot (r',m')=(rr',rm'+r'm)$.
There are ring homomorphisms $\rho\colon R\to R\ltimes M$ with $\rho(r)=(r,0)$ and $\pi\colon R\ltimes M\to R$ with $\pi(r,m)=r$.
Note that the composition $\pi\rho$ is the identity on $R$, that is, $R$ is an algebra retract of $R\ltimes M$.

\begin{lem}\label{lem130711a}
Let $M$ be a finitely generated $R$-module. Then for each non-negative integer $i$ we have $(\mu_R(M))^i\leq\beta^{R\ltimes M}_i(R)$.
\end{lem}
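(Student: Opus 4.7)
The plan is to reduce the inequality to the Poincar\'e series identity
$$P^{R\ltimes M}_R(t)=\frac{1}{1-tP^R_M(t)},$$
and then extract the bound by comparing coefficients. To establish the identity I would first observe that, since $R$ is an algebra retract of $R\ltimes M$ via the section $\rho$ of $\pi$, the induced map on $\operatorname{Tor}_*(k,k)$ is split surjective, so $\pi$ is a large homomorphism in the sense recalled earlier. The factorization $P^{R\ltimes M}_k(t)=P^R_k(t)\cdot P^{R\ltimes M}_R(t)$, valid for large local homomorphisms, therefore applies; combining it with Herzog's formula $P^{R\ltimes M}_k(t)=P^R_k(t)/(1-tP^R_M(t))$ for the Poincar\'e series of the residue field over a trivial extension yields the displayed identity.

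Next I would rewrite the identity as $P^{R\ltimes M}_R(t)\cdot(1-tP^R_M(t))=1$ and compare coefficients of $t^i$. For $i\geq 1$ this gives the recurrence
$$\beta^{R\ltimes M}_i(R)=\sum_{j=0}^{i-1}\beta^R_j(M)\cdot\beta^{R\ltimes M}_{i-1-j}(R)\geq \beta^R_0(M)\cdot \beta^{R\ltimes M}_{i-1}(R)=\mu_R(M)\cdot\beta^{R\ltimes M}_{i-1}(R),$$
where the inequality uses that all Betti numbers are non-negative and that $\beta^R_0(M)=\mu_R(M)$. Together with $\beta^{R\ltimes M}_0(R)=1$, a straightforward induction on $i$ then yields $\beta^{R\ltimes M}_i(R)\geq \mu_R(M)^i$, which is the desired inequality.

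The only substantive step is invoking Herzog's Poincar\'e series formula for trivial extensions (together with the Avramov--Levin formula for large homomorphisms); once these are in hand, the remainder of the argument is a short exercise in manipulating formal power series with non-negative coefficients, so no real obstacle is expected.
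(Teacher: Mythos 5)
Your proof is correct and follows essentially the same route as the paper: both rest on the identity $P^{R\ltimes M}_R(t)=1+tP^{R\ltimes M}_R(t)P^R_M(t)$ (which the paper simply cites as Herzog's Corollary~1, whereas you re-derive it from largeness of the retraction $\pi$ together with the Poincar\'e series formula for trivial extensions), followed by comparing coefficients and inducting on $i$. No gaps.
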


\begin{proof}
By~\cite[Corollary 1]{He} we have $P^{R\ltimes M}_R(t)=1+tP_R^{R\ltimes M}(t)P^R_M(t)$. Hence, by induction on $i$, there exists a sequence $\{\gamma_i\}$ of natural numbers such that $\beta^{R\ltimes M}_i(R)=\gamma_i+(\mu_R(M))^i$. Therefore, $(\mu_R(M))^i\leq\beta^{R\ltimes M}_i(R)$.
\end{proof}

The following result gives a characterization of local
rings admitting non-trivial semidualizing modules. Recall that a finitely generated $R$-module $C$ is called \emph{semidualizing} if $R\cong \Hom[R]CC$ and $\Ext[R]{i}CC=0$ for all $i>0$. For instance, every finitely generated projective $R$-module of rank 1 is
semidualizing. Also, every dualizing module is semidualizing with finite injective dimension.

\begin{prop}\label{cor130528b}
Let $M$ be a finitely generated $R$-module with $\ann_R(M)=0$. Then the following conditions are equivalent:
\begin{itemize}
\item[(i)]
$M\cong R$.
\item[(ii)]
$\aq_{R\ltimes M}R=2$.
\item[(iii)]
$\cidim_{R\ltimes M}R<\infty$.
\item[(iv)]
$\cx_{R\ltimes M}R<\infty$.
\end{itemize}
If one of the above conditions holds, then $\cidim_{R\ltimes M}R=0$ and $\cx_{R\ltimes M}R=1$.
\end{prop}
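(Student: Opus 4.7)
The plan is to establish the cycle (i) $\Rightarrow$ (iii) $\Rightarrow$ (iv) $\Rightarrow$ (v) $\Rightarrow$ (i), handle (ii) separately by combining Theorem~\ref{thm130719a} with a direct lower bound on $\aq_{R\ltimes M}(R)$, and read off the precise numerical values from the case $M\cong R$. Note first that $\ann_R(M)=0$ together with $R\neq 0$ forces $M\neq 0$, a fact used throughout.

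For (i) $\Rightarrow$ (iii), when $M\cong R$ the trivial extension is isomorphic to $R[x]/(x^2)$, and $R\ltimes M \xra{\id} R\ltimes M \from R[x]$ is an upper quasi-deformation whose kernel $(x^2)$ is generated by an $R[x]$-regular element; since $\pd_{R[x]}(R)=1=\pd_{R[x]}(R[x]/(x^2))$, this yields $\ciddim_{R\ltimes M}(R)=0$. The implications (iii) $\Rightarrow$ (iv) and (iv) $\Rightarrow$ (v) are the standard inequality $\cidim\leq\ciddim$ and~\cite[Theorem (5.3)]{AGP1}. The critical step is (v) $\Rightarrow$ (i), which exploits Lemma~\ref{lem130711a}: assuming $\cx_{R\ltimes M}(R)=d<\infty$, there is $\gamma$ with $\beta_i^{R\ltimes M}(R)\leq \gamma i^{d-1}$ for large $i$, while $(\mu_R(M))^i\leq \beta_i^{R\ltimes M}(R)$. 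Since exponential growth cannot be dominated by a polynomial, $\mu_R(M)\leq 1$, and together with $M\neq 0$ we conclude $\mu_R(M)=1$, so $M$ is cyclic and $M\cong R/\ann_R(M)=R$. This is precisely the step that bypasses the $\ch(k)=0$ hypothesis required in Theorem~\ref{thm130528a}.

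For (ii), the direction (ii) $\Rightarrow$ (iii) is immediate from Theorem~\ref{thm130719a}. For (i) $\Rightarrow$ (ii), (iii) together with Theorem~\ref{thm130719a} gives $\aq_{R\ltimes M}(R)\leq 2$; for the matching lower bound, observe that $\ker(\pi\colon R\ltimes M\to R)=M$ is nonzero with $M^2=0$ in $R\ltimes M$, so every element of $M$ is a zero-divisor and $M$ cannot be generated by a regular sequence. The classical André-Quillen criterion for surjections (kernel not generated by a regular sequence implies nonvanishing of the second AQ homology module) then yields $\DD_2(R\mid R\ltimes M,-)\neq 0$, hence $\aq_{R\ltimes M}(R)=2$. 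Finally, $\ciddim=\cidim=0$ and $\cx=1$ follow from the case $M\cong R$: the quasi-deformation above supplies the first two equalities, and the $2$-periodic resolution of $R$ over $R[x]/(x^2)$ gives $\beta_i^{R\ltimes M}(R)=1$ for all $i$. I expect the only delicate point to be the exponential-versus-polynomial comparison in (v) $\Rightarrow$ (i); every other step is either routine or a direct appeal to a result already established in the paper.
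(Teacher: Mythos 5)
Your proposal is correct, but it departs from the paper's argument at two points, both in interesting ways. For (i)$\implies$(ii) the paper observes that when $M\cong R$ the kernel of $\pi$ is generated by an \emph{exact zero divisor} and invokes the quasi-complete-intersection machinery of~\cite[1.7, 1.9]{AHS} to get $\aq_{R\ltimes M}R=2$ in one stroke; you instead split the equality into an upper bound (via Theorem~\ref{thm130719a}) and a lower bound from the classical criterion that a surjection whose kernel is not generated by a regular sequence has $\DD_2\neq 0$ (cf.~\cite[Proposition (1.8)]{Av}). Your zero-divisor argument ($M\neq 0$ and $M^2=0$ in $R\ltimes M$) is sound and has the added virtue of showing $\aq_{R\ltimes M}R\geq 2$ for \emph{every} $M$ with $\ann_R(M)=0$, not just $M\cong R$. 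For (i)$\implies$(iii) the paper routes through (ii) and Theorem~\ref{thm130316b}, whereas you exhibit an explicit upper quasi-deformation; this is more elementary and directly produces the value $\ciddim_{R\ltimes M}R=0$ (modulo the standard fact that a finite CI-dimension of a nonzero module is nonnegative, which you should cite, e.g.\ via the Auslander--Buchsbaum formula of~\cite{AGP1}, to upgrade your ``$\leq 0$'' to ``$=0$''). The step (v)$\implies$(i) via Lemma~\ref{lem130711a} and the exponential-versus-polynomial comparison is exactly the paper's argument, as is the $2$-periodic resolution giving $\cx_{R\ltimes R}R=1$.

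One small repair is needed in your quasi-deformation: $R[x]$ is not a local ring, so $R\ltimes M\xra{\id}R\ltimes M\from R[x]$ is not literally a diagram of local homomorphisms as the definition in~\ref{fact130625a} requires. Replace $R[x]$ by its localization $R[x]_{\fm+(x)}$ (or by $R[\![x]\!]$); the element $x^2$ remains regular there, the quotient by $(x^2)$ is still $R\ltimes R$ since that ring is already local, and the computation $\pd_Q(R)=1=\pd_Q(R\ltimes R)$ goes through unchanged. With that adjustment the proof is complete.
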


\begin{proof}
(i)$\implies$(ii)
Assume that $M\cong R$.
It is straightforward to see that $\Ker \pi=(x)$, where $x$
is an exact zero divisor in the sense of~\cite{HS}, that is, $\ann_{R\ltimes M}(x)\cong (R\ltimes M)/x(R\ltimes M)$. Hence,
$\aq_{R\ltimes M} R=2$ by~\cite[1.7, 3.1]{AHS}.

(ii)$\implies$(iii) follows from Theorem~\ref{thm130316b}.

(iii)$\implies$(iv) follows from~\cite[Theorem (5.3)]{AGP1}.

(iv)$\implies$(i)
Assume that $\cx_{R\ltimes M} R<\infty$. Hence, the sequence of the Betti numbers of $R$ over $R\ltimes M$ has polynomial growth. Since, by Lemma~\ref{lem130711a}, for each non-negative integer $i$ we have $(\mu_R(M))^i\leq\beta^{R\ltimes M}_i(R)$,
we conclude that $\mu_R(M)=1$, which implies that $M\cong R$.

Note that when $\cidim_{R\ltimes M}R<\infty$, then $\cidim_{R\ltimes M}R=0$ by~\cite[Lemma 3.14]{SSW}. Note also that when $M\cong R$, $\ker \pi$ is generated by an exact zero divisor $x$. Therefore, $$\cdots\xra{x} R\ltimes R\xra{x}R\ltimes R\xra{\pi} R\to 0$$
is a free resolution of $R$ over $R\ltimes R$, that is, $\cx_{R\ltimes R}R=1$.
\end{proof}

The following is an application of Proposition~\ref{cor130528b} in a concrete case.

\begin{ex}\label{ex131117}
Let $S = R[\underline{\textbf{x}}]/J$, where $\underline{\textbf{x}} = x_1,\cdots,x_n$
for some integer $n\geq 1$, and $J = \langle x_i^{n_i}x_j^{n_j}\mid n_i+n_j=2,\ \text{and}\ 1\leq i\leq j\leq n\rangle$. Then one of
the integers $\cx_S R$, $\cidim_S R$, or $\aq_S R$ is finite if and only if $n=1$.
In this case we have $\cx_S R = 1$, $\cidim_S R=0$, and $\aq_S R=2$.
\end{ex}

\section*{Acknowledgment}

\vspace{0.05in}\noindent The author is deeply grateful to Javier Majadas Soto for his comments on Fact~\ref{fact2}, and he is also grateful to Saeed Nasseh for his comments on this paper.

\providecommand{\bysame}{\leavevmode\hbox to3em{\hrulefill}\thinspace}
\providecommand{\MR}{\relax\ifhmode\unskip\space\fi MR }
% \MRhref is called by the amsart/book/proc definition of \MR.
\providecommand{\MRhref}[2]{%
  \href{http://www.ams.org/mathscinet-getitem?mr=#1}{#2}
}
\providecommand{\href}[2]{#2}

\end{document}